\definecolor{webgreen}{rgb}{0,.5,0}
\definecolor{webbrown}{rgb}{.6,0,0}
\tikzset{circle node/.style = {circle,inner sep=1pt,draw, fill=white},
        X node/.style = {fill=white, inner sep=1pt},
        dot node/.style = {circle, draw, inner sep=5pt}
        }
\newtheorem{theorem}{Theorem}
\newtheorem{proposition}[theorem]{Proposition}
\newtheorem{corollary}[theorem]{Corollary}
\theoremstyle{definition}
\newtheorem{example}[theorem]{Example}
\newcommand{\seqnum}[1]{\href{http://oeis.org/#1}{\underline{#1}}}
\begin{document}

\begin{center}
\vskip 1cm{\LARGE\bf A Three-parameter Family Of Involutions In The Riordan Group Defined By Orthogonal Polynomials} \vskip 1cm \large
Paul Barry\\
School of Science\\
Waterford Institute of Technology\\
Ireland\\
\href{mailto:pbarry@wit.ie}{\tt pbarry@wit.ie}
\end{center}
\vskip .2 in

\begin{abstract} We show how to define, for every Riordan group element $(g(x), f(x))$, an involution in the Riordan group. More generally, we show that for every pseudo-involution $P$ in the Riordan group, we can define a new involution beginning with an arbitrary element $(g(x), f(x))$ in the Riordan group. We then use this result to show that certain two-parameter families of orthogonal polynomials  defined by a Riordan array can lead to involutions in the Riordan group, and we give an explicit form of these involutions.
\end{abstract}

\section{Preliminaries}
The Riordan group $\mathcal{R}$ \cite{book, SGWW, bbook} is a subgroup of the group of lower-triangular invertible matrices, whose elements are defined algebraically. We work over a field of characteristic zero $\mathbf{k}$, which can be taken to be the field $\mathbb{C}$ of complex numbers. In practice, many of the examples we shall give will have entries taken in the ring of integers $\mathbb{Z}$. The group $\mathcal{R}$ can be viewed as an abstract group, or as its matrix realization. We explain this now.
We let 
$$\mathbf{F}_0 = \{ g(x)=g_0+g_1 x + g_2 x^2+ \cdots\,|\, g_0 \ne 0, g_i \in \mathbf{k} \},$$ be the ring of \emph{invertible} power series in the indeterminate $x$ over $\mathbf{k}$. Similarly we let 
$$\mathbf{F}_1 = \{ f(x)=f_1 x + f_2 x^2+ \cdots\,|\, f_0 = 0, f_1 \ne 0, f_i \in \mathbf{k} \}$$ be the ring of \emph{composable} power series in the indeterminate $x$  over the field $\mathbf{k}$. 
Then as an abstract group, we have 
$$\mathcal{R}=\{ (g(x), f(x))\,|\, g \in \mathbf{F}_0, f \in \mathbf{F}_1\}.$$ 
In fact, we have that 
$$\mathcal{R} = \mathbf{F}_0 \rtimes \mathbf{F}_1.$$ 
The matrix representation of the group $\mathcal{R}$ can be obtained by associating to the pair of power series $(g(x), f(x))$ the matrix $(a_{n,k})_{0 \le n,k \le \infty}$, with $a_{n,k} \in \mathbf{k}$, where 
$$a_{n,k}=[x^n] g(x)f(x)^k,$$ with $[x^n]$ denoting the functional \cite{MC} that extracts the coefficient of $x^n$ is the power series to which it is applied. The fact that $f(x) \in \mathcal{F}_1$ ensures that this matrix is lower triangular. We use the term \emph{Riordan array} to denote the matrix element corresponding to the abstract element $(g(x), f(x))$.

The group structure on $\mathcal{R}$ is defined by the product 
$$(g(x), f(x))\cdot (u(x), v(x))= (g(x).u(f(x)), v(f(x))),$$ and the inverse  
$$(g(x), f(x))^{-1}= \left(\frac{1}{g(\bar{f}(x))}, \bar{f}(x)\right),$$ where 
$\bar{f}(x) = f^{\langle -1 \rangle}$ is the compositional inverse of $f$. This means that $\bar{f}$ is the solution $u(x)$ of the equation $f(u)=x$ which satisfies $u(0)=0$. 

The identity element is $(1, x)$. The product in $\mathcal{R}$ corresponds to matrix multiplication in its matrix representation. 

\begin{example} Using the product rule, we have 
$$(g(x), f(x))= (g(x), x) \cdot (1, f(x)),$$ which corresponds to $\mathcal{R} = \mathbf{F}_0 \rtimes \mathbf{F}_1$.
\end{example}

\begin{example} The Riordan group element $\left(\frac{1}{1-x}, \frac{x}{1-x}\right)$ is represented by the binomial matrix (Pascal's triangle) $\left(\binom{n}{k}\right)_{0 \le n,k \le \infty}$.
\end{example} 

In the rest of this note, we shall drop the distinction between abstract elements such as $\left(\frac{1}{1-x}, \frac{x}{1-x}\right)$ and their representations (here, $\left(\binom{n}{k}\right)_{0 \le n,k \le \infty}$), and we shall use the most convenient form for the topic under discussion.

\section{Involutions and pseudo-involutions in the Riordan group}
By an \emph{involution} in the Riordan group \cite{B2, Burstein, CN, CJKS, CK, CKS07, He1, HSh2, PS} we understand an element $(g(x), f(x)) \in \mathcal{R}$ such that
$$ (g(x), f(x))^2 = (g(x), f(x)) \cdot (g(x), f(x))=(g(x).g(f(x)),f(f(x)) = (1,x).$$ We see immediately that for an involution $(g(x), f(x))$,  we have 
$$ \bar{f}=f$$ and 
$$ g(x)= \frac{1}{g(f(x))} = \frac{1}{g(\bar{f}(x))}.$$ 
In particular, we have $(g(x), f(x))^{-1}  = (g(x), f(x))$. 

\begin{example} The Riordan arrays $(1, x)$ and $(1,-x)$ are involutions.
\end{example}
\begin{example} The signed binomial matrix $\left(\frac{1}{1-x}, \frac{-x}{1-x}\right)$ or $\left((-1)^k \binom{n}{k}\right)$ is an involution in the Riordan group.
\end{example}
By a \emph{pseudo-involution} in the Riordan group we shall understand an element $(g(x), f(x))$ such that 
$$(g(x), f(x))\cdot (1, -x) = (g(x), -f(x))$$ is an involution. 

We see for instance that the binomial matrix $\left(\frac{1}{1-x}, \frac{x}{1-x}\right)$ is thus a pseudo-involution. 
\begin{example} The identity $(1,x)$ is trivially a pseudo-involution. This is because 
$(1, x) \cdot (1,-x)=(1,-x)$ is an involution.
\end{example}

\begin{example} The set of elements 
$$\mathbf{Bin}=\left\{ \left(\frac{1}{1-\alpha x}, \frac{x}{1-\alpha x}\right)\,|\, \alpha \in \mathbf{k} \right\}$$ is a subgroup of the Riordan group. All its elements are pseudo-involutions.
\end{example}

\section{Orthogonal polynomials and Riordan arrays}
By a family of orthogonal polynomials $P_n(x)$ we shall understand a sequence of polynomials $P_n(x)$ of exact degree $n$, $n \ge -1$, such that they satisfy a three-term recurrence 
$$P_n(x)=(x- \alpha_n)P_{n-1}(x) - \beta_n P_{n-2}(x),$$ with $P_{-1}(x)=0$ and $P_0(x)=1$.    
If we let $P_n(x)=\sum_{i=0}^n a_{n,i}x^i$, then it is clear that the matrix $(a_{n,k})$ is lower triangular. The question then arises as to whether a Riordan array can be the coefficient matrix of a family of orthogonal polynomials. The answer is in the affirmative \cite{classical, bbook}. We have that the Riordan array 
$$\left(\frac{1- rx- s x^2}{1+ ax + b x^2}, \frac{x}{1+ ax+bx^2}\right)$$ is the coefficient array of the family of (constant coefficient) generalized Chebyshev polynomials $P_n(x)$ that satisfy the three-term recurrence 
$$P_n(x) = (x-a) P_{n-1}(x)- b P_{n-2}(x),$$ with 
$P_0(x)=1$,  $P_1(x)=x-a-r$ and $P_2(x)=x^2-(2a+r)x+a^2+ar-b-s$.   

\section{The main result}
Our results concerning orthogonal polynomials will be a consequence of the following general results.

\begin{proposition} We let $P$ denote an arbitrary pseudo-involution in the Riordan group. For arbitrary $(g(x), f(x)) \in \mathcal{F}_0 \times \mathcal{F}_1$  the Riordan array
$$ (g(x), f(x))^{-1} \cdot P \cdot (g(-x), f(-x))$$ is an involution.
\end{proposition}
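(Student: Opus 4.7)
The plan is to reduce the statement to a direct computation that uses the pseudo-involution property of $P$ cleanly. Writing $A = (g(x), f(x))$ for brevity, I would first use the Riordan product rule to note that
$$(1,-x) \cdot A = (1 \cdot g(-x),\, f(-x)) = (g(-x), f(-x)),$$
so the expression under consideration may be rewritten in the more symmetric form
$$B := A^{-1} \cdot P \cdot (1,-x) \cdot A.$$

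Next I would translate the hypothesis that $P$ is a pseudo-involution into the identity $\bigl(P \cdot (1,-x)\bigr)^2 = (1,x)$. Expanding the square, and using the fact that $(1,-x)$ is its own inverse, this is equivalent to the single identity
$$P \cdot (1,-x) \cdot P \cdot (1,-x) = (1,x).$$

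With these two observations in hand, the computation is immediate:
$$B^2 = A^{-1} P (1,-x) A \cdot A^{-1} P (1,-x) A = A^{-1} \bigl(P (1,-x) P (1,-x)\bigr) A = A^{-1} (1,x) A = (1,x),$$
which is the identity of the Riordan group, so $B$ is an involution as claimed.

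The only conceptual step is spotting the rewriting $(g(-x), f(-x)) = (1,-x) \cdot A$; once this is done the argument is a formal cancellation, using associativity of the Riordan product and the middle collapse $P (1,-x) P (1,-x) = (1,x)$. I do not anticipate a serious obstacle, as the pseudo-involution property of $P$ has been designed precisely so that, when $(1,-x)$ is inserted to its right, the resulting product squares to the identity. The role of the factors $A^{-1}$ on the left and $A$ on the right is then just to conjugate this identity, producing a new involution while preserving the involutive property.
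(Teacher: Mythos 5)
Your proof is correct and follows essentially the same route as the paper: both rest on the key identity $(g(-x),f(-x)) = (1,-x)\cdot(g(x),f(x))$ and then recognize the product as a conjugate $A^{-1}\,\mathcal{I}\,A$ of the involution $\mathcal{I} = P\cdot(1,-x)$. Your version merely writes out the squaring-and-cancellation explicitly where the paper invokes the general fact that a conjugate of an involution is an involution; the content is identical.
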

\begin{proof}
We have  $$(g(-x), f(-x))=(1,-x) \cdot (g(x), f(x)).$$
Thus 
$$(g(x), f(x))^{-1} \cdot P \cdot (g(-x), f(-x))=(g(x), xg(x))^{-1} \cdot P \cdot (1,-x)\cdot (g, xg(x)).$$
Now $ \mathcal{I}= P \cdot (1,-x)$ is an involution, and so 
$$(g(x), f(x))^{-1} \cdot P \cdot (g(-x), f(-x))= (g(x), f(x))^{-1} \cdot \mathcal{I} \cdot (g(x), f(x))$$ is the conjugate of an involution, which is again an involution.
\end{proof}

\begin{example}
We take the example of $g(x)=c(x)=\frac{1-\sqrt{1-4x}}{2x}$, the generating function of the Catalan numbers $C_n=\frac{1}{n+1}\binom{2n}{n}$ \seqnum{A000108}. We let $f(x)=xg(x)$, and  we take  $P=(1,x)$, the identity, which is both an involution and a pseudo-involution.
Then we obtain the involution 
$$ ((1+ x c(x))c(x), -x(1+xc(x))c(x)).$$
\end{example}

\begin{example} We let $(g(x), f(x))= \left(\frac{1}{1+rx+sx^2}, \frac{x}{1+rx+sx^2}\right)$. Then 
$$(g(x), f(x))^{-1} \cdot (g(-x),-f(-x)) = \left(\frac{1}{1-2rx}, \frac{-x}{1-2rx}\right),$$ an element of the subgroup $\mathbf{Bin}$. 
\end{example}

\begin{example} The array product  
$$\left(\frac{1}{1+x^2}, \frac{x(1-x)}{1+x^2}\right)^{-1}\cdot \left(\frac{1}{1+x^2}\frac{-x(1+x)}{1+x^2}\right)$$ yields the involution 
$$(1, -x(1+x)c(x(1+x))).$$
\end{example}

\begin{example} \textbf{The RNA matrix}. Cameron and Nkwanta \cite{CN} give the example of the RNA matrix. Here, we take $(g, f)=\left(\frac{1}{1+x^2}, \frac{x}{1+x^2}\right)^{-1}$, and $P=\left(\frac{1}{1-x},\frac{x}{1-x}\right)$. Thus the RNA involution matrix is given by the product
$$\left(\frac{1}{1+x^2}, \frac{x}{1+x^2}\right) \cdot \left(\frac{1}{1-x},\frac{x}{1-x}\right) \cdot \left(\frac{1}{1+x^2}, \frac{-x}{1+x^2}\right)^{-1}.$$ The matrix begins 
$$\left(
\begin{array}{ccccccc}
 1 & 0 & 0 & 0 & 0 & 0 & 0 \\
 1 & -1 & 0 & 0 & 0 & 0 & 0 \\
 1 & -2 & 1 & 0 & 0 & 0 & 0 \\
 2 & -3 & 3 & -1 & 0 & 0 & 0 \\
 4 & -6 & 6 & -4 & 1 & 0 & 0 \\
 8 & -13 & 13 & -10 & 5 & -1 & 0 \\
 17 & -28 & 30 & -24 & 15 & -6 & 1 \\
\end{array}
\right).$$ The unsigned version, which is a pseudo-involution, is \seqnum{A097724}.
\end{example}

\begin{example} Our final example of this section uses the matrix $(g(x), f(x))=(M(x), xM(x))$ where 
$M(x)= \frac{1-x-\sqrt{1-2x-3x^2}}{2x^2}$ is the generating function of the Motzkin numbers \seqnum{A001006}. 
Then the product $(M(x), xM(x))^{-1} \cdot (M(-x),-xM(-x))$ yields the involution 
$$\left(\frac{1}{(1+x)^2} c\left(\frac{x^2}{(1+x)^4}\right), \frac{-x}{(1+x)^2} c\left(\frac{x^2}{(1+x)^4}\right)\right).$$
This involution begins 
$$\left(
\begin{array}{ccccccc}
 1 & 0 & 0 & 0 & 0 & 0 & 0 \\
 -2 & -1 & 0 & 0 & 0 & 0 & 0 \\
 4 & 4 & 1 & 0 & 0 & 0 & 0 \\
 -10 & -12 & -6 & -1 & 0 & 0 & 0 \\
 28 & 36 & 24 & 8 & 1 & 0 & 0 \\
 -82 & -112 & -86 & -40 & -10 & -1 & 0 \\
 248 & 356 & 300 & 168 & 60 & 12 & 1 \\
\end{array}
\right).$$ 
\end{example}

\section{Orthogonal polynomials and involutions}

We consider the Riordan array 
$$(g,f)=\left(\frac{1}{1+rx+sx^2}, \frac{x(1-tx)}{1+rx+sx^2}\right).$$ 
Taking $P=(1,x)$, this gives us the  involution $(\tilde{g}(x), \tilde{f}(x))$ in the Riordan group 
where 
$$\tilde{g}(x)=\frac{s+t(r+t)}{s+t(t+1)-(r^2t+2rs-s+t^2)x+t(r-1)\sqrt{1-2(r+2t)x+(r^2-4s)x^2}},$$ and 
$$\tilde{f}(x)= \frac{t\sqrt{1-2(r+2t)x+(r^2-4s)x^2}+(t^2-s)x-t}{s+t^2-r(rt+2s)x+rt\sqrt{1-2(r+2t)x+(r^2-4s)x^2}}.$$

The Riordan array $\left(\frac{1+(r-s)x}{1+(r+s)x}, \frac{x}{(1+rx)(1+(r+s)x)}\right)$ is the coefficient array of the family of orthogonal polynomials $P_n(x;r,s)$ that satisfy the $3$-term recurrence
$$P_n(x;r,s)=(x-(2r+s)) P_{n-1}(x;r,s)-r(r+s) P_{n-2}(x;r,s),$$ with 
$P_0(x;r,s)=1$ and $P_1(x;r,s)=x-2s$.

We then have the following proposition.

\begin{proposition}
For $r,s \in \mathbb{Z}$, the Riordan array 
\begin{scriptsize}
$$\left(\frac{1-(r-2t)x-(rt-s-t^2)x^2}{1+(r+2t)x+(rt+s+t^2)x^2}, \frac{x}{1+(r+2t)x+(rt+s+t^2)x^2)}\right)^{-1} \cdot \left(1,\frac{-x(1+2tx)}{1-(r-2t)x-(rt-s-t^2)x^2}\right)$$ 
\end{scriptsize} is an involution in the Riordan group.
\end{proposition}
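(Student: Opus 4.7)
The plan is to derive this from the main result of Section~4 by making the specific choice
\begin{equation*}
(g(x), f(x)) = \left(\frac{1}{1+rx+sx^2},\; \frac{x(1-tx)}{1+rx+sx^2}\right), \qquad P = (1, x),
\end{equation*}
and then identifying the resulting involution $(g,f)^{-1} \cdot (g(-x), f(-x))$ with the product in the statement. The approach rests on a compositional identity linking the Riordan arrays $(g, f)$ and $(A, B)$ that appear, where $(A, B)$ denotes the array whose inverse is the first factor of the proposition.

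I would first compute both sides explicitly. Setting $u = \bar{f}(x)$, the defining quadratic $(sx+t)u^2+(rx-1)u+x = 0$ rearranges to $x(1+ru+su^2) = u(1-tu)$, so that $1/g(u) = u(1-tu)/x$ and hence $(g, f)^{-1} = \bigl(u(1-tu)/x,\,u\bigr)$. Multiplying by $(g(-x), f(-x))$ on the right yields
\begin{equation*}
(g, f)^{-1} \cdot (g(-x), f(-x)) = \left(\frac{u(1-tu)}{x(1-ru+su^2)},\; \frac{-u(1+tu)}{1-ru+su^2}\right).
\end{equation*}
An entirely parallel calculation on the other side---writing $v = \bar{B}(x)$ and $D = 1-(r-2t)v-(rt-s-t^2)v^2$---gives $(A, B)^{-1} = \bigl(v/(xD),\, v\bigr)$, so that the product in the proposition equals $\bigl(v/(xD),\; -v(1+2tv)/D\bigr)$.

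The technical heart of the argument is the identity $v = u/(1-tu)$, equivalently $u = v/(1+tv)$. I would establish this via the cleaner \emph{compositional} identity $B(x) = f(x/(1+tx))$: with $\phi(x) = x/(1+tx)$, short computations give $1-t\phi(x) = 1/(1+tx)$ and $1+r\phi(x)+s\phi(x)^2 = [1+(r+2t)x+(rt+s+t^2)x^2]/(1+tx)^2$, so that $f(\phi(x)) = B(x)$. Passing to compositional inverses gives $\bar B = \phi^{-1} \circ \bar f$, which is precisely $v = u/(1-tu)$.

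Given this key relation, the two products collapse onto each other component-by-component. Substituting $u = v/(1+tv)$, one obtains $1-tu = 1/(1+tv)$, $1+tu = (1+2tv)/(1+tv)$, and $1-ru+su^2 = D/(1+tv)^2$, from which both components match. The main obstacle is spotting the compositional relation $B = f \circ \phi$ in the first place; once it is in hand, the remaining manipulations are routine rational-function algebra, and the involution property follows at once from the main result.
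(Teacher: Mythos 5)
Your proof is correct and follows the same strategy as the paper: both reduce the claim to Proposition 1 (with $P=(1,x)$) by showing that the stated product coincides with the involution $(\tilde g,\tilde f)$ arising from $(g,f)=\left(\frac{1}{1+rx+sx^2},\frac{x(1-tx)}{1+rx+sx^2}\right)$. The paper's proof consists solely of the assertion ``evaluating the product, we find that it is equal to $(\tilde g(x),\tilde f(x))$,'' so your compositional identity $B=f\circ\phi$ with $\phi(x)=x/(1+tx)$ supplies exactly the verification the paper leaves implicit, and all of your intermediate computations check out.
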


\begin{proof}
Evaluating the product, we find that it is equal to $(\tilde{g}(x), \tilde{f}(x))$.
\end{proof}

Thus for each triple $(r,s,t) \in \mathbf{Z}^3$, we can associate to the family of orthogonal polynomials with coefficient array the Riordan array   $\left(\frac{1+(r-2t)x-(rt-s-t^2)x^2}{1+(r+2t)x+(rt+s+t^2)x^2}, \frac{x}{1+(r+2t)x+(rt+s+t^2)x^2)}\right)$ an involution in the Riordan group.

We note that the generating function $\tilde{g}(x)$ has the following generating function expression.
$$\tilde{g}(x)=
\cfrac{1}{1-2rx-\cfrac{2rtx^2}{1-(r+2t)x-\cfrac{(s+t(r+t))x^2}{1-(r+2t)x-\cdots}}},$$ while the row sums of the involution $(\tilde{g}(x), \tilde{f}(x))$ have a generating function that can be expressed as the related continued fraction 
$$
\cfrac{1}{1-(2r-1)x-\cfrac{2t(r-1)x^2}{1-(r+2t)x-\cfrac{(s+t(r+t))x^2}{1-(r+2t)x-\cdots}}}.$$

\begin{example}
We let $s=t(r-t), r=1, t=\frac{1}{2}$. Then we obtain that 
$$\left(\frac{1}{(1+x)^2}, \frac{x}{(1+x)^2}\right)^{-1} \cdot (1, -x(1+x)) = (c(x)^2, -xc(x)^3)$$ is an involution.

A variant of this is found by setting $r=2t$ and $s=t^2$ to obtain that 
$$\left(\frac{1}{(1+2tx)^2}, \frac{x}{(1+2tx)^2}\right)^{-1} \cdot (1,-x(1+2tx))$$ is an involution. This shows that 
$$ (c(2tx)^2, -x ct(2tx)^3)$$ is an involution.
\end{example}

\begin{corollary}
Given $r,t \in \mathbb{Z}$, the Riordan array 
$$\left(\frac{1+(t-r)x}{1+(t+r)x}, \frac{x}{(1+tx)(1+(t+r)x)}\right)^{-1} \cdot \left(1, \frac{-x(1+2tx)}{(1+tx)(1+(t-r)x)}\right)$$ is an involution in the Riordan group.
\end{corollary}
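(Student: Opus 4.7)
The plan is to derive the Corollary as the specialization $s=0$ of Proposition~11. When $s=0$, both quadratics appearing in the statement of Proposition~11 acquire the common linear factor $(1+tx)$, and after cancellation the rational functions of Proposition~11 become precisely those of the Corollary.

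First, I would perform the two factorizations produced by setting $s=0$. The denominator polynomial factors as
$$1+(r+2t)x+t(r+t)x^2 = (1+tx)(1+(t+r)x),$$
and the numerator-type polynomial factors as
$$1+(2t-r)x-t(r-t)x^2 = (1+tx)(1+(t-r)x).$$
Both factorizations are immediate once one writes $a+b$ and $ab$ for the sought linear factors and checks that $(a,b)=(t,t+r)$ and $(a,b)=(t,t-r)$ respectively do the job.

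Second, I would substitute these factorizations into the two factors appearing in Proposition~11. The first factor reduces, after cancelling $(1+tx)$ from the fraction in the first coordinate, to $\left(\frac{1+(t-r)x}{1+(t+r)x}, \frac{x}{(1+tx)(1+(t+r)x)}\right)$, and the second factor reduces to $\left(1, \frac{-x(1+2tx)}{(1+tx)(1+(t-r)x)}\right)$. These match the two arrays appearing in the Corollary verbatim.

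Since Proposition~11 is asserted for every integer triple $(r,s,t)\in\mathbb{Z}^3$ and the triple $(r,0,t)$ is admissible, the involution claim follows from Proposition~11 applied with $s=0$. There is essentially no obstacle here: the proof is a routine specialization, and the only observation of substance is that the choice $s=0$ is exactly what allows the common factor $(1+tx)$ to appear in both quadratics and permits the rational function in the first coordinate to be expressed in the reduced form $\frac{1+(t-r)x}{1+(t+r)x}$.
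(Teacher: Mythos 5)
Your proof is correct and is essentially the paper's own argument: the paper simply states that the corollary is the case $s=0$ of the preceding proposition, and your explicit verification of the factorizations $1+(r+2t)x+t(r+t)x^2=(1+tx)(1+(t+r)x)$ and $1+(2t-r)x-t(r-t)x^2=(1+tx)(1+(t-r)x)$ just fills in the routine details the paper leaves implicit.
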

\begin{proof} This is the case $s=0$ of the above proposition.
\end{proof}

\begin{example}
We take the case of $r=t=1$ to obtain the involution 
$$\left(\frac{1}{1+2x}, \frac{x}{(1+x)(1+2x)}\right)^{-1}\cdot \left(1, \frac{-x(1+2x)}{1+x}\right)=(S(x), -xS(x)^2),$$ where $S(x)$ denotes the generating function of the large Schr\"oder numbers \seqnum{A006318}.
This array begins 
$$\left(
\begin{array}{ccccccc}
 1 & 0 & 0 & 0 & 0 & 0 & 0 \\
 2 & -1 & 0 & 0 & 0 & 0 & 0 \\
 6 & -6 & 1 & 0 & 0 & 0 & 0 \\
 22 & -30 & 10 & -1 & 0 & 0 & 0 \\
 90 & -146 & 70 & -14 & 1 & 0 & 0 \\
 394 & -714 & 430 & -126 & 18 & -1 & 0 \\
 1806 & -3534 & 2490 & -938 & 198 & -22 & 1 \\
\end{array}
\right).$$ 
The unsigned matrix is \seqnum{A110098}.

We note that the $B$-sequence of this array is $4,4,4,\ldots$. 

For the general case of the involution 
$$\left(\frac{1}{1+2rx}, \frac{x}{(1+2rx)(1+rx)}\right)^{-1} \cdot \left(1, -\frac{x(1+2rx)}{1+rx}\right)$$ we observe that the $B$-sequence begins $4r, 4r^3, 4r^5,\ldots$.
\end{example}

\begin{example} We take the case of $s=0, r=1, t=2r=2$. We find that the product 
$$\left(\frac{1+x}{1+3x}, \frac{x}{(1+2x)(1+3x)}\right)^{-1} \cdot \left(1, \frac{-x(1+4x)}{(1+x)(1+2x)}\right)$$ gives the involution 
$$\left(\frac{3}{2-x+\sqrt{1-10x+x^2}}, -\frac{1-5x+x^2-(1-x)\sqrt{1-10x+x^2}}{1+2x}\right).$$ 
The corresponding pseudo-involution begins 
$$\left(
\begin{array}{ccccccc}
 1 & 0 & 0 & 0 & 0 & 0 & 0 \\
 2 & 1 & 0 & 0 & 0 & 0 & 0 \\
 8 & 8 & 1 & 0 & 0 & 0 & 0 \\
 44 & 56 & 14 & 1 & 0 & 0 & 0 \\
 284 & 404 & 140 & 20 & 1 & 0 & 0 \\
 2012 & 3044 & 1268 & 260 & 26 & 1 & 0 \\
 15140 & 23804 & 11132 & 2852 & 416 & 32 & 1 \\
\end{array}
\right).$$  
\end{example}

\section{Conclusions} The central result of this note is that for any Riordan array $(g,f)$, and any pseudo-involution $P$, we have an involution $\mathcal{I}(g,f,P)$ in the Riordan group, where 
$$\mathcal{I}(g,f,P) = (g(x), f(x))^{-1} \cdot P \cdot (g(-x), f(-x)).$$ 
We have investigated this in the specific case of the coefficient matrices of certain families of orthogonal 
polynomials that can be defined by Riordan arrays. Clearly, other families of Riordan arrays $(g(x), f(x))$ could be investigated. In our examples, we have confined our discussions to the cases $P=(1,x)$ and $P=\left(\frac{1}{1-x}, \frac{x}{1-x}\right)$. Again, it is clear that other choices of $P$ may lead to other interesting examples. With $\mathcal{I}=\mathcal{I}(g,f,P)$, we have expressions such as 
$$(g(x), f(x)) \cdot \mathcal{I} = P \cdot (g(-x), f(-x))$$ and 
$$ (g(x), f(x)) = P \cdot (g(-x), f(-x)) \cdot \mathcal{I},$$ which may warrant further consideration.

\section{Acknowledgments} The author has benefitted from fruitful discussions with Lou Shapiro.

\bigskip
\hrule
\bigskip
\noindent 2020 \emph{Mathematics Subject Classification}: Primary 05A15. Secondary 05A05, 05A19, 11B83.
\noindent \emph{Keywords:} Riordan array, Riordan group, involution, pseudo involution, orthogonal polynomial.

\bigskip
\hrule
\bigskip
\noindent (Concerned with sequences
\seqnum{A000108},
\seqnum{A001006},
\seqnum{A006318},
\seqnum{A097724}, and
\seqnum{A110098}.)

\end{document}